\theoremstyle{plain}
\newtheorem{theorem}{Theorem}[section]
\newtheorem{lemma}[theorem]{Lemma}
\theoremstyle{definition}
\newtheorem{definition}{Definition}
\theoremstyle{remark}
\renewcommand{\leq}{\leqslant}
\renewcommand{\geq}{\geqslant}
\newcommand{\argmin}{\operatornamewithlimits{argmin}}
\renewcommand {\*} {\ast}
\newcommand{\sm}[2]{\begin{smallmatrix}\item1\\\item2 \end{smallmatrix}}
\providecommand{\floor}[1]{\left \lfloor \item1 \right \rfloor }
\providecommand{\ceil}[1]{\left \lceil \item1 \right \rceil }
\newcommand\ddfrac[2]{\frac{\displaystyle \item1}{\displaystyle \item2}}
\newcommand\addtag{\refstepcounter{equation}\tag{\theequation}}
\def\eps{\varepsilon}
\newcommand{\mc}[1]{\mathbb{\item1}}
\newcommand{\circled}[1]{\raisebox{.5pt}{\textcircled{\raisebox{-.9pt} {#1}}}}
\begin{document}

\let\oldproofname=\proofname
\renewcommand{\proofname}{\rm\bf{\oldproofname}}
\title{A universal modification of the linear coupling method}
\author{
\name{Sergey Guminov\textsuperscript{a}$^{\ast}$\thanks{Corresponding author. Email: sergey.guminov@phystech.edu}, Alexander Gasnikov\textsuperscript{a,b}, Anton Anikin\textsuperscript{c},
Alexander Gornov\textsuperscript{c}}
\affil{\textsuperscript{a} Moscow Institute of Physics and Technology, Moscow, Russia\\\textsuperscript{b} Institute for Information Transmission Problems RAS, Moscow, Russia\\\textsuperscript{c}Matrosov Institute for System Dynamics and Control Theory of Siberian Branch of Russian Academy of Sciences, Irkutsk}}

\maketitle
\begin{abstract}
In the late sixties, N. Shor and B. Polyak independently proposed optimal first-order methods for solving non-smooth convex optimization problems. In 1982 A. Nemirovski proposed optimal first-order methods for solving smooth convex optimization problems, which utilized auxiliary line search. In 1985 A. Nemirovski and Yu. Nesterov proposed a parametric family of optimal first-order methods for solving convex optimization problems with intermediate smoothness. In 2013 Yu. Nesterov proposed a universal gradient method which combined all good properties of the previous methods, except the possibility of using auxiliary line search. One can typically observe that in practice auxiliary line search improves performance for many tasks. In this paper, we propose the apparently first such method of non-smooth convex optimization allowing the use of the line search procedure. Moreover, it is based on the universal gradient method, which does not require any a priori information about the actual degree of smoothness of the problem. Numerical experiments demonstrate that the proposed method is, in some cases, considerably faster than Nesterov's universal gradient method.
\end{abstract}
\begin{keywords}
Convex optimization; First-order methods; Non-smooth optimization; Line search
\end{keywords}
\begin{classcode}
90C25, 68Q25
\end{classcode}

\section{Introduction}
Traditionally, convex optimization problems have been divided into two main classes: the class of smooth problems and the class of non-smooth problems \cite{polyakintroduction}. However, introducing an intermediate class of problems with convex differentiable objectives with H$\ddot{\text{o}}$lder continuous gradient allows us to view the classes of smooth and non-smooth convex optimization problems as two extreme cases of this intermediate class. 

The first optimal methods for this class were introduced in \cite{nemirovski1985optimal}. However, both these procedures and some others presented later had a serious drawback: they required too much information about the objective (for example, the degree of the objective function's smoothness or the distance from the initial point to the solution) to be used efficiently. 

In \cite{nesterov2015universal} the Universal Fast Gradient Method is presented. It is optimal for the class of problems with convex differentiable objectives with H$\ddot{\text{o}}$lder continuous gradient, has a low iteration cost, and does not involve any parameters dependent on the objective.

Some minimization methods allow for the use of an exact line search procedure. A classic example of such a method is the steepest descent method, which is a version of the gradient descent method in which on each iteration instead of performing a step of fixed length in the direction of the negative gradient the objective function is minimized along said direction. Although this does not improve the worst-case convergence rate, such line search procedures often perform very well in practice. The aim of this work was to construct a universal method which allowed for the use of an exact line search procedure. By combining the core idea of Nesterov's Universal Fast Gradient Method with the framework described by Allen-Zhu et al. in \cite{allen2014linear}, such a method was devised. As far as it is known to the authors of this paper, our work contains the first example of such a method, although a method utilising exact line search for solving minimization problems with convex Lipschitz continuous objectives was recently constructed by Drori et al \cite{drori2018efficient}. Their work also contains an example of a universal method which uses an exact three dimension subspace minimization on each iteration. Our numerical experiments indicate that the exact line search step does indeed demonstrate great performance on some non-smooth problems. Note that in the well-known Shor's type methods with variable metric for non-smooth convex optimization problems line search is performed not in the direction of the negative gradient. These methods also require quadratic memory  \cite{polyakintroduction}.  

The paper is organized as follows. Firstly, we define the intermediate class of problems which we refer to above, set the problem and give other definition used later in this paper. Secondly, we define Nesterov's Universal Fast Gradient Method, which we will be using as a benchmark in our numerical experiments. In \textbf{Section 2} we present our Universal Linear Coupling Method, prove its convergence and equip it with a stopping criterion. \textbf{Section 3} contains notes on how to implement the line search procedure and how its accuracy affects the method's convergence. Finally, \textbf{Section 4} is dedicated to the results of our numerical experiments.

\subsection{Preliminaries}

One of the conditions often used in convergence analysis of numerical optimization methods is $L$-smoothness.

\begin{definition}
A  function $f:\ \mathbb{R}^n\rightarrow \mathbb{R}^m$ is called Lipschitz continuous with constant $L$ if

\[\|f(x)-f(y)\|\leq L\|x-y\|\quad \forall x,y\in\mathbb{R}^n.\]

\end{definition}

\begin{definition}
A differentiable function $f:\ \mathbb{R}^n\rightarrow \mathbb{R}^m$ is called $L$-smooth if its gradient is Lipschitz continuous with constant $L$:
\[\|\nabla f(x)-\nabla f(y)\|\leq L\|x-y\|\quad \forall x,y\in\mathbb{R}^n.\]
\end{definition}

We will be using the following natural generalisation of Lipschitz continuity.

\begin{definition}
A  function $f:\ \mathbb{R}^n\rightarrow \mathbb{R}^m$ satisfies the H$\ddot{\text{o}}$lder condition (or is H$\ddot{\text{o}}$lder continuous) if there exist constants $\nu\in[0,1]$ and $M_\nu\geqslant 0$, such that \[\|f(x)-f(y)\| \leqslant M_\nu\|x-y\|^\nu\quad \forall\ x,y\in\mathbb{R}^n.\]
\end{definition} 

The constant $\nu$ in this definition is called the exponent of the H$\ddot{\text{o}}$lder condition. H$\ddot{\text{o}}$lder continuity coincides with Lipschitz continuity if $\nu=1$. On the other hand, H$\ddot{\text{o}}$lder continuity with $\nu=0$ is just boundedness. If a function is differentiable and its gradient is H$\ddot{\text{o}}$lder continuous, then exponent $\nu$ is a measure of the function's smoothness.

Throughout this paper we will be working with the problem \[f(x)\rightarrow \min_{x\in\mathbb{R}^n}, \] where $f(x)$ is a convex differentiable function and its gradient satisfies the H$\ddot{\text{o}}$lder condition for some $\nu\in[0,1]$ with some constant $M_{\nu}$. We denote some solution to this problem as $x^\ast$.

Let us define Bregman divergence $V_x(y)$ as follows:

\[V_x(y)=\omega(y)-\langle\nabla\omega(x),y-x\rangle-\omega(x), \] where $\omega(x)$ is a 1-strongly convex function. $\omega$ is also called a distance generating function. By definition, 

\[V_x(y)\geqslant \frac{1}{2}\|y-x\|^2.\]

\subsection{Universal Method}

In \cite{Devolder2014} it is shown that the notion of inexact oracle allows one to apply some methods of smooth convex optimization to non-smooth problems. The following lemma plays a key role in this:

\begin{lemma}
Let function $f$ be differentiable and have H$\ddot{o}$lder continuous gradient. Then for any $\delta>0$ we have \[f(y)\leqslant f(x)+\langle\nabla f(x), y-x\rangle+\frac{M}{2}\|y-x\|^2+\frac{\delta}{2},\] where \[M=M\left(\delta,\nu, M_\nu\right)=\left[\frac{1-\nu}{1+\nu}\frac{M_\nu}{\delta}\right]^{\frac{1-\nu}{1+\nu}}M_\nu.\]

\end{lemma}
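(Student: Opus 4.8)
The plan is to start from the Hölder continuity of the gradient and turn it into the desired quadratic upper bound via a Young-type inequality. The natural first step is to recall the integral identity
\[
f(y) = f(x) + \langle \nabla f(x), y - x \rangle + \int_0^1 \langle \nabla f(x + t(y-x)) - \nabla f(x),\, y - x \rangle \, dt,
\]
which holds because $f$ is differentiable. Bounding the inner product by Cauchy--Schwarz and then invoking the Hölder condition $\|\nabla f(x+t(y-x)) - \nabla f(x)\| \le M_\nu t^\nu \|y-x\|^\nu$, the integral is at most
\[
\int_0^1 M_\nu t^\nu \|y-x\|^{1+\nu}\,dt = \frac{M_\nu}{1+\nu}\|y-x\|^{1+\nu}.
\]
So the whole task reduces to showing the elementary scalar inequality
\[
\frac{M_\nu}{1+\nu}\, r^{1+\nu} \;\le\; \frac{M}{2}\, r^2 + \frac{\delta}{2}
\qquad \text{for all } r = \|y-x\| \ge 0,
\]
with the stated choice of $M = M(\delta,\nu,M_\nu)$.

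For the scalar step I would use Young's inequality $ab \le \frac{a^p}{p} + \frac{b^q}{q}$ with conjugate exponents chosen so that the $r$-power on the left, which is $1+\nu$, splits into a part of order $r^2$ and a constant part. Writing $r^{1+\nu} = r^{1+\nu}\cdot 1$ and applying Young with $p = \tfrac{2}{1+\nu}$ and $q = \tfrac{2}{1-\nu}$ (so that $\tfrac1p+\tfrac1q=1$) to the product $\bigl(\lambda r^{1+\nu}\bigr)\cdot\bigl(\lambda^{-1}\bigr)$ for a free scaling parameter $\lambda>0$ gives
\[
r^{1+\nu} \;\le\; \frac{1+\nu}{2}\,\lambda^{\frac{2}{1+\nu}}\, r^{2} \;+\; \frac{1-\nu}{2}\,\lambda^{-\frac{2}{1-\nu}}.
\]
Multiplying by $\tfrac{M_\nu}{1+\nu}$ and then choosing $\lambda$ so that the constant term equals exactly $\tfrac{\delta}{2}$ forces $\lambda^{-2/(1-\nu)} = \tfrac{(1+\nu)\delta}{(1-\nu)M_\nu}$, i.e. $\lambda^{2/(1+\nu)} = \bigl[\tfrac{1-\nu}{1+\nu}\tfrac{M_\nu}{\delta}\bigr]^{(1-\nu)/(1+\nu)}$; substituting this back into the $r^2$-coefficient reproduces precisely $M = \bigl[\tfrac{1-\nu}{1+\nu}\tfrac{M_\nu}{\delta}\bigr]^{(1-\nu)/(1+\nu)} M_\nu$, as claimed. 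The degenerate cases $\nu = 1$ (where $q = \infty$ and the constant term vanishes, $M = M_1$) and $\nu = 0$ (where $p=\infty$, giving $f(y)\le f(x)+\langle\nabla f(x),y-x\rangle+\tfrac{M_0}{2}\|y-x\|\cdot 1$-type bounds) should be checked separately or recovered by a limiting argument.

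The only real subtlety — the ``hard part'' — is pinning down the optimal scaling $\lambda$ and verifying that it yields exactly the stated closed form for $M$; this is a short optimization (minimize the $r^2$-coefficient subject to the constant term being $\delta/2$, or equivalently minimize over $\lambda$ the right-hand side bound) and amounts to routine algebra with fractional exponents. Everything else — the integral representation, Cauchy--Schwarz, and the direct application of Young's inequality — is standard. I would present the argument in the order: (1) integral identity and Hölder bound to get the $r^{1+\nu}$ estimate; (2) Young's inequality with a free parameter; (3) optimize the parameter to match the constant $\delta/2$ and read off $M$; (4) remark on the endpoint cases $\nu\in\{0,1\}$.
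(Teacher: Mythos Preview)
Your argument is correct and is essentially the standard proof of this result. Note, however, that the paper does not actually prove this lemma: it is quoted from \cite{Devolder2014} (and appears, in the same form, as Lemma~2 in \cite{nesterov2015universal}), so there is no ``paper's own proof'' to compare against. Your derivation---integral remainder, Cauchy--Schwarz plus H\"older continuity to get the $\tfrac{M_\nu}{1+\nu}\|y-x\|^{1+\nu}$ bound, then Young's inequality with a free scaling parameter optimized to match the $\delta/2$ term---is precisely the argument used in those references, and your algebra for recovering the closed form of $M$ checks out.
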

The exact values $\left(f(x),\nabla f(x)\right)$ of a differentiable function $f$ with H$\ddot{\text{o}}$lder continuous gradient allow us to obtain an upper bound similar to the one obtained by using inexact information for a differentiable and L-smooth function. This allows one to apply methods reliant on the usage of an inexact oracle for L-smooth objectives to optimize objectives with H$\ddot{\text{o}}$lder continuos gradient.

However, knowledge of the parameters $\nu$ and $M_\nu$ from the definition of H$\ddot{\text{o}}$lder continuity is still required to apply such an approach. In \cite{nesterov2015universal} a line search procedure was used to estimate the needed parameters similarly to how the constant of $L$-smoothness is estimated in adaptive methods. For a general norm on $\mathbb{R}^n$ and a corresponding Bregman divergence $V_x(y)$ the Universal Fast Gradient Method may be written as follows.
\newpage
\begin{algorithm}
    \SetKwInOut{Input}{Input}
    \SetKwInOut{Output}{Output}
	
    \caption{UFGM($f$, $L_0$, $x_0$, $\eps$, $T$)}
    \Input{$f$ a differentiable convex function with H$\ddot{\text{o}}$lder continuous gradient;
    initial value of the "inexact" Lipschitz continuity constant $L_0$;
    initial point $x_0$;
    accuracy $\eps$;
    number of iterations $T$.}
    $y_0\gets x_0$, $z_0\gets x_0$, $\alpha_0 \gets 0$, $\psi_0(x)\gets V_{x_0}(x)$\\
  \For{$k=0$ to $T-1$}{
  	$L_{k+1}\gets\frac{L_{k}}{2}$\\
    \While{True}{
    $v_k=\argmin\limits_{x\in \mathbb{R}^n} \psi_k(x)$\\
    $\alpha_{k+1}\gets\frac{1}{2L_{k+1}}+\sqrt{\frac{1}{4L^2_{k+1}}+\alpha^2_k\frac{L_k}{L_{k+1}}}$\\
    $\tau_k\gets\frac{1}{\alpha_{k+1}L_{k+1}}$\\
    $x_{k+1}\gets\tau_kv_k+(1-\tau_k)y_k$\\
    $z_{k+1}\gets \argmin\limits_{z\in \mathbb{R}^n} \alpha_{k+1}\langle\nabla f(x_{k+1}), z-v_k\rangle +V_{v_k}(z)$\\
    $y_{k+1}\gets\tau_kz_{k+1}+(1-\tau_k)y_k$\\
    \If{$f(y_{k+1})\leqslant f(x_{k+1})+\langle\nabla f(x_{k+1}),y_{k+1}-x_{k+1}\rangle+\frac{L_{k+1}}{2}\|y_{k+1}-x_{k+1}\|^2+\frac{\tau_k\eps}{2}$}{\textbf{break}}
    \Else{$L_{k+1}\gets 2L_{k+1}$}
  	}
    $\psi_{k+1}(x)\gets\psi_k(x)+\alpha_{k+1}\left[f(x_{k+1})+\langle\nabla f(x_{k+1}),x-x_{k+1}\rangle\right]$  
  }
  \Return{$y_T$}
  
\end{algorithm}

The above method does not require a priori knowledge of the smoothness parameter $\nu$ or the corresponding $M_\nu$. The following theorem gives the convergence rate of the above algorithm:

\begin{theorem}
Let f be a differentiable convex function with H$\ddot{\text{o}}$lder continuous gradient with some exponent $\nu$ and $M_\nu<\infty$. Let $L_0\leqslant M(\eps,\nu,M_\nu)$. Then
\[f(y_k)-f(x^\ast) \leq \left[\frac{2^{2+4\nu}M_\nu^2}{\eps^{1-\nu}k^{1+3\nu}}\right]^{\frac{1}{1+\nu}} +\frac{\eps}{2}.\]

\end{theorem}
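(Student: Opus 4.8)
The plan is to follow the standard estimate-sequence / linear-coupling analysis for accelerated gradient methods, adapted to the inexact-oracle setting that Lemma 1.5 provides. First I would establish the key one-step inequality: after the line search terminates at iteration $k$, we have the relaxed descent condition
\[
f(y_{k+1})\leq f(x_{k+1})+\langle\nabla f(x_{k+1}),y_{k+1}-x_{k+1}\rangle+\frac{L_{k+1}}{2}\|y_{k+1}-x_{k+1}\|^2+\frac{\tau_k\eps}{2}.
\]
Combining this with convexity of $f$, the definition of $z_{k+1}$ as a mirror-descent step, the $1$-strong convexity of $\omega$ (hence $V_{v_k}(z)\geq\frac12\|z-v_k\|^2$), and the relations $\tau_k=1/(\alpha_{k+1}L_{k+1})$, $x_{k+1}=\tau_k v_k+(1-\tau_k)y_k$, one derives the telescoping inequality
\[
\alpha_{k+1}\big(f(x_{k+1})+\langle\nabla f(x_{k+1}),u-x_{k+1}\rangle\big)+A_k f(y_k)\geq A_{k+1}f(y_{k+1})+V_{v_{k+1}}(u)-V_{v_k}(u)-\alpha_{k+1}\tfrac{\tau_k\eps}{2}A_{k+1}/\alpha_{k+1},
\]
where $A_{k+1}=\sum_{i\le k+1}\alpha_i$; the algebra here is routine but is exactly the point where $\alpha_{k+1}$ was chosen to satisfy $\alpha_{k+1}^2 L_{k+1}=A_{k+1}$ (equivalently the quadratic defining $\alpha_{k+1}$).

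Second I would sum this over $k=0,\dots,K-1$ and set $u=x^\ast$. The $V$-terms telescope, the $\langle\nabla f(x_{k+1}),x^\ast-x_{k+1}\rangle$ terms are bounded using convexity $f(x^\ast)\geq f(x_{k+1})+\langle\nabla f(x_{k+1}),x^\ast-x_{k+1}\rangle$, and one obtains
\[
A_K\big(f(y_K)-f(x^\ast)\big)\leq V_{x_0}(x^\ast)+\frac{\eps}{2}\sum_{k=0}^{K-1}\alpha_{k+1}\tau_k.
\]
Since $\alpha_{k+1}\tau_k=1/L_{k+1}$ is bounded and $\sum$ of such terms is $O(K/L_{\min})$ — more carefully, $\alpha_{k+1}\tau_k=1/L_{k+1}\le 1$ after noting the line search keeps $L_{k+1}$ comparable to $M(\eps,\nu,M_\nu)$ and the hypothesis $L_0\le M(\eps,\nu,M_\nu)$ controls its growth — the error term is at most $\tfrac{\eps}{2}\cdot(\text{something}\le A_K)$, yielding $f(y_K)-f(x^\ast)\leq V_{x_0}(x^\ast)/A_K+\eps/2$. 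So the whole result reduces to a lower bound on the accumulated step sizes $A_K$.

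Third, the crux: I would prove the lower bound $A_K\geq c\,\big(\eps^{1-\nu}K^{1+3\nu}/M_\nu^2\big)^{1/(1+\nu)}$ with an explicit constant matching $2^{-(2+4\nu)}$. This comes from two facts: (i) the recursion $\alpha_{k+1}=\tfrac{1}{2L_{k+1}}+\sqrt{\tfrac{1}{4L_{k+1}^2}+\alpha_k^2 L_k/L_{k+1}}$ gives $\sqrt{A_{k+1}}-\sqrt{A_k}\geq \tfrac{1}{2\sqrt{L_{k+1}}}$ (the standard accelerated-rate telescoping, using $A_{k+1}=\alpha_{k+1}^2L_{k+1}$ and $A_k\le\alpha_k^2L_k$... actually $A_k=\alpha_k^2L_k$ for $k\ge1$), hence $\sqrt{A_K}\geq\sum_{k=1}^{K}\tfrac{1}{2\sqrt{L_k}}\geq\tfrac{K}{2\sqrt{L_{\max}}}$; and (ii) the line-search termination criterion together with Lemma 1.5 (applied with $\delta=\tau_k\eps$) guarantees $L_{k+1}\leq 2M(\tau_k\eps,\nu,M_\nu)$, so every accepted $L_k$ is bounded in terms of $M_\nu$, $\nu$, and $\eps$ via the explicit formula for $M$. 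Plugging $M(\delta,\nu,M_\nu)=\big[\tfrac{1-\nu}{1+\nu}\tfrac{M_\nu}{\delta}\big]^{(1-\nu)/(1+\nu)}M_\nu$ and $\delta\gtrsim\eps/K$ (since $\tau_k\gtrsim1/K$), a short computation converts $A_K\geq K^2/(4L_{\max})$ into the claimed bound.

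The main obstacle I anticipate is step three — specifically, tracking constants carefully enough to land exactly on $2^{2+4\nu}$, and correctly bounding $\tau_k$ from below (one needs $\tau_k\geq$ const$/k$, which follows from $\alpha_{k+1}L_{k+1}=A_{k+1}/\alpha_{k+1}\leq$ something and the growth rate of $A_k$, a mildly circular-looking estimate that must be unwound in the right order). The interplay between the per-iteration adaptive constant $L_{k+1}$ (which depends on $\tau_k$, which depends on $\alpha_{k+1}$, which depends on $L_{k+1}$) is the genuinely delicate part; one resolves it by first getting a crude polynomial lower bound on $A_k$, using that to lower-bound $\tau_k$, then feeding that back to sharpen the bound on $L_{k+1}$ and hence on $A_K$. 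Everything else is bookkeeping with strong convexity and the three-point identity for Bregman divergences.
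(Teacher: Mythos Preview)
The paper does not actually prove this theorem: it is stated as background (Nesterov's result from \cite{nesterov2015universal}) and no proof is given. The closest thing in the paper is the proof of Theorem~2.4 for the ULCM method, whose structure is essentially the same, so I compare against that.

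Your first two steps (the one-step descent inequality, then telescoping with $u=x^\ast$ using $\alpha_{k+1}^2L_{k+1}-\alpha_{k+1}=\alpha_k^2L_k$) match the paper's Lemmas~2.2--2.3 and the start of the proof of Theorem~2.4, and lead cleanly to $f(y_K)-f(x^\ast)\le \Theta/A_K+\eps/2$ with $A_K=\alpha_K^2L_K$. (A small bookkeeping slip: your summed error term should be $\tfrac{\eps}{2}\sum\alpha_{k+1}$, not $\tfrac{\eps}{2}\sum\alpha_{k+1}\tau_k$; the per-step error is $\alpha_{k+1}^2L_{k+1}\cdot\tfrac{\tau_k\eps}{2}=\tfrac{\alpha_{k+1}\eps}{2}$, and $\sum\alpha_k=A_K$ gives the $\eps/2$ immediately.)

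Where you diverge from the paper is step three, the lower bound on $A_K$. You propose $\sqrt{A_{k+1}}-\sqrt{A_k}\ge\tfrac{1}{2\sqrt{L_{k+1}}}$, then bound $L_{k+1}$ via $\tau_k\gtrsim 1/K$, and you correctly flag this as circular: $\tau_k$ depends on $A_k$, which is what you are trying to estimate. The paper's argument (for Theorem~2.4) sidesteps the bootstrap entirely. It uses the identity $\tau_{k-1}=\alpha_k/A_k$ to substitute \emph{directly} into the line-search bound $L_k\le 2M(\tau_{k-1}\eps,\nu,M_\nu)$, obtaining
\[
\frac{\alpha_k^2}{A_k}=\frac{1}{L_k}\ge c_{\nu,\eps}\Big(\frac{\alpha_k}{A_k}\Big)^{\frac{1-\nu}{1+\nu}},
\]
which rearranges to $\alpha_k\ge c\,A_k^{2\nu/(1+3\nu)}$ --- a closed relation between $\alpha_k$ and $A_k$ with no residual dependence on $\tau_k$ or $L_k$. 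Combined with $A_{k+1}-A_k=\alpha_{k+1}$ this gives $A_{k+1}^\gamma-A_k^\gamma\ge\text{const}$ for $\gamma=\tfrac{1+\nu}{1+3\nu}$, and a single telescoping yields the sharp lower bound on $A_K$ with the correct constant. Your route can be made to work, but the paper's substitution is both cleaner and what delivers the explicit $2^{2+4\nu}$ without an iterative refinement.
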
 

What follows is that one may obtain an $\eps$-accurate solution in \[k\leqslant \inf_{\nu\in[0,1]} \left[\left(\frac{2^\frac{3+5\nu}{2}M_\nu}{\eps}\right)^\frac{2}{1+3\nu} \left(\frac{1}{2}\|x_0 -x^\ast\|^2\right)^{\frac{1+\nu}{1+3\nu}}\right]\] iterations. If the problem admits multiple solutions, then $x^\ast$ may be considered to be the solution minimizing $\frac{1}{2}\| x_0 - x^\ast \|^2$. As shown in \cite{nemirovskii1983problem}, this is optimal up to a multiplicative constant independent of the accuracy, the initial point, and the objective function.

\section{Universal Linear Coupling Method}

We are now ready to present our universal method based on the linear coupling method proposed by Allen-Zhu et al. \cite{allen2014linear} The Linear Coupling framework is chosen as a basis for our method because it allows for the usage of an exact line search step, which is our goal. The original linear coupling method utilizes gradient and mirror descent steps to guarantee optimal convergence rate for convex objectives. However, it is clear from the convergence analysis of said method that the gradient step is only used to obtain a lower bound on the decrease of the objective during this step. This means that any procedure capable of guaranteeing at least such a decrease may be utilized instead. Since in the unconstrained Euclidean setting the gradient step is always performed in the direction of the negative of the gradient, one may use the steepest descent method instead. This idea combined with the idea of Nesterov's universal method allows us to modify the Linear Coupling method in the following way:

\begin{algorithm}
    \SetKwInOut{Input}{Input}
    \SetKwInOut{Output}{Output}
	
    \caption{ULCM($f$, $L_0$, $x_0$, $\eps$, $T$)}
    \Input{$f$ a differentiable convex function with H$\ddot{\text{o}}$lder continuous gradient;
    initial value of the "inexact" Lipschitz continuity constant $L_0$;
    initial point $x_0$;
    accuracy $\eps$;
    number of iterations $T$.}
    $y_0 \gets x_0$, $z_0 \gets x_0$, $\alpha_0 \gets 0$\\
  \For{$k=0$ to $T-1$}{
  	$L_{k+1}\gets\frac{L_{k}}{2}$\\
    \While{True}{
  	$\alpha_{k+1}\gets\frac{1}{2L_{k+1}}+\sqrt{\frac{1}{4L^2_{k+1}}+\alpha^2_k\frac{L_k}{L_{k+1}}}$\\
    $\tau_k\gets\frac{1}{\alpha_{k+1}L_{k+1}}$\\
    $x_{k+1}\gets\tau_kz_k+(1-\tau_k)y_k$\\
    $h_{k+1}\gets\argmin\limits_{h\geqslant 0} f(x_{k+1}-h\nabla f(x_{k+1}))$\\
    $y_{k+1}\gets x_{k+1}-h_{k+1}\nabla f(x_{k+1})$\\
    $z_{k+1}\gets z_k-\alpha_{k+1}\nabla f(x_{k+1})$\\
    \If{$\langle \alpha_{k+1}\nabla f(x_{k+1}),z_k-z_{k+1}\rangle-\frac{1}{2}\|z_k-z_{k+1}\|^2\leq \alpha^2_{k+1}L_{k+1}(f(x_{k+1})-f(y_{k+1})+\frac{\tau_k\eps}{2})$}{\textbf{break}}
    \Else{$L_{k+1}\gets 2L_{k+1}$}
  	}
 
  }
  \Return{$y_T$}
\end{algorithm}
\newpage

As far as it is known to the authors of this paper, this is the first universal method of non-smooth optimization utilizing steepest descent steps.
 
From this point onwards $L_k$ will always denote the value obtained at the end of a full iteration of the "for" loop.

We shall now show that the above algorithm is well-defined. To be more precise, we shall prove that the if-condition inside the while loop is satisfied after a finite number of iterations for any $k$. 

\begin{lemma}
$f(x)$ is a convex differentiable function and its gradient satisfies the H$\ddot{\text{o}}$lder condition for some $\nu\in[0,1]$ with some constant $M_{\nu}$. Then for all steps k of above algorithm

\[\alpha_{k+1}\langle\nabla f(x_{k+1}),z_k-z_{k+1}\rangle-\frac{1}{2}\|z_k-z_{k+1}\|^2\leqslant \alpha^2_{k+1}L_{k+1}\left(f(x_{k+1})-f(y_{k+1})+\frac{\tau_k\eps}{2}\right), \] for all $L_{k+1}$ satisfying

\[L_{k+1}\geqslant M(\tau_k\eps,\nu,M_\nu)=\left[\frac{1-\nu}{1+\nu}\frac{M_\nu}{\tau_k\eps}\right]^{\frac{1-\nu}{1+\nu}}M_\nu.\]

\end{lemma}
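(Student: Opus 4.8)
The plan is to use the inexact-oracle inequality from Lemma 1.5 (with $\delta = \tau_k\eps$), which guarantees that for $L_{k+1} \geq M(\tau_k\eps,\nu,M_\nu)$ we have the descent-type bound
\[f(y_{k+1}) \leq f(x_{k+1}) + \langle \nabla f(x_{k+1}), y_{k+1} - x_{k+1}\rangle + \frac{L_{k+1}}{2}\|y_{k+1}-x_{k+1}\|^2 + \frac{\tau_k\eps}{2}.\]
Here $y_{k+1} = x_{k+1} - h_{k+1}\nabla f(x_{k+1})$ is the steepest-descent point. The key point is that the steepest-descent step does at least as well as any fixed step, in particular the step of length $1/L_{k+1}$; applying the displayed inequality with the point $x_{k+1} - \frac{1}{L_{k+1}}\nabla f(x_{k+1})$ in place of $y_{k+1}$ on its right-hand side and using minimality of $f(y_{k+1})$ on the left, one obtains the "gradient-step guarantee"
\[f(x_{k+1}) - f(y_{k+1}) \geq \frac{1}{2L_{k+1}}\|\nabla f(x_{k+1})\|^2 - \frac{\tau_k\eps}{2}.\]

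Next I would compute the left-hand side of the claimed inequality explicitly using the mirror-descent update $z_{k+1} = z_k - \alpha_{k+1}\nabla f(x_{k+1})$, so that $z_k - z_{k+1} = \alpha_{k+1}\nabla f(x_{k+1})$. Substituting,
\[\alpha_{k+1}\langle \nabla f(x_{k+1}), z_k - z_{k+1}\rangle - \frac{1}{2}\|z_k - z_{k+1}\|^2 = \alpha_{k+1}^2\|\nabla f(x_{k+1})\|^2 - \frac{\alpha_{k+1}^2}{2}\|\nabla f(x_{k+1})\|^2 = \frac{\alpha_{k+1}^2}{2}\|\nabla f(x_{k+1})\|^2.\]
Then, multiplying the gradient-step guarantee by $\alpha_{k+1}^2 L_{k+1}$,
\[\alpha_{k+1}^2 L_{k+1}\left(f(x_{k+1}) - f(y_{k+1}) + \frac{\tau_k\eps}{2}\right) \geq \alpha_{k+1}^2 L_{k+1}\cdot\frac{1}{2L_{k+1}}\|\nabla f(x_{k+1})\|^2 = \frac{\alpha_{k+1}^2}{2}\|\nabla f(x_{k+1})\|^2,\]
which matches the left-hand side exactly, proving the lemma.

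The main obstacle, and the step requiring the most care, is the first one: justifying the gradient-step guarantee from Lemma 1.5. One must check that Lemma 1.5 (stated for a single $\delta$ and a single pair of points) can legitimately be invoked with the auxiliary point $x_{k+1} - \frac{1}{L_{k+1}}\nabla f(x_{k+1})$, and that the quadratic in the step length $h$ on the right-hand side of that inequality is minimized at $h = 1/L_{k+1}$, giving the $-\frac{1}{2L_{k+1}}\|\nabla f\|^2$ term; the comparison $f(y_{k+1}) \leq f\bigl(x_{k+1} - \frac{1}{L_{k+1}}\nabla f(x_{k+1})\bigr)$ is then immediate from the definition of $h_{k+1}$ as an exact minimizer. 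Everything after that is the routine algebra above. Note we never need $L_{k+1} \geq M(\tau_k\eps,\nu,M_\nu)$ for anything except to make the inexact-oracle inequality valid, so the stated lower bound on $L_{k+1}$ is exactly what is used.
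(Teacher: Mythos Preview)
Your proposal is correct and follows essentially the same approach as the paper: both bound the left-hand side by $\tfrac{\alpha_{k+1}^2}{2}\|\nabla f(x_{k+1})\|^2$ and then invoke the inexact-oracle inequality (Lemma~1.1 in the paper, which you call Lemma~1.5) at the auxiliary gradient-step point, combined with the optimality of the exact line search, to produce the right-hand side. The only cosmetic differences are that you compute the left-hand side as an exact equality using $z_k-z_{k+1}=\alpha_{k+1}\nabla f(x_{k+1})$ (the paper instead bounds it via $\|\alpha_{k+1}\nabla f(x_{k+1})-(z_k-z_{k+1})\|^2\geq 0$), and you work directly with $L_{k+1}$ in the quadratic upper bound while the paper uses $M=M(\tau_k\eps,\nu,M_\nu)$ and then the hypothesis $L_{k+1}\geq M$.
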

\begin{proof}

\begin{align*}
\alpha_{k+1}\langle\nabla f(x_{k+1}),z_k-z_{k+1}\rangle-\frac{1}{2}\|z_k-z_{k+1}\|^2\\\leqslant\frac{\alpha^2_{k+1}}{2}\|\nabla f(x_{k+1})\|^2&\leqslant M\alpha_{k+1}^2\left(f(x_{k+1})-f(y_{k+1})+\frac{\tau_k\eps}{2})\right)
\end{align*}

Here the first inequality follows from the fact that $\|\alpha_{k+1}\nabla f(x_{k+1})-(z_k-z_{k+1})\|^2\geq 0$. To get the last inequality we will use \textsc{Lemma 1.1} with $\delta=\tau_k\eps$ and $x=x_{k+1}$, $y=x_{k+1}-\beta\nabla f(x_{k+1})$: 

\begin{align*}
f(y)&\leqslant f(x_{k+1})+\langle\nabla f(x_{k+1}), -\beta\nabla f(x_{k+1})\rangle+\frac{\beta^2M}{2}\|\nabla f(x_{k+1})\|^2+\frac{\tau_k\eps}{2}\\ &= f(x_{k+1})-\beta\|\nabla f(x_{k+1})\|^2+\frac{\beta^2M}{2}\|\nabla f(x_{k+1})\|^2+\frac{\tau_k\eps}{2}.
\end{align*}
Minimising the right-hand side over $\beta\in\mathbb{R}$, we get $\beta=\frac{1}{M}$. This results in the following guarantee:

\[f(y)-f(x_{k+1})\leqslant -\frac{\|\nabla f(x_{k+1})\|^2}{2M}+\frac{\tau_k\eps}{2}. \]

In our algorithm \begin{align*}
y_{k+1}= x_{k+1}-h_{k+1}\nabla f(x_{k+1}),\\h_{k+1}=\argmin\limits_{h\geqslant 0} f(x_{k+1}-h\nabla f(x_{k+1})),
\end{align*} so \[f(y_{k+1})-f(x_{k+1})\leq f(y)-f(x_{k+1})\leq -\frac{\|\nabla f(x_{k+1})\|^2}{2M}+\frac{\tau_k\eps}{2}.\]

\end{proof}
\subsection{Comparison with the UFGM method}

Note that in the case of Euclidean norm and $V_x(y)=\frac{1}{2}\|x-y\|^2$, in the UFGM algorithm the mirror descent step \[z_{k+1}\gets \argmin\limits_{z\in \mathbb{R}^n} \alpha_{k+1}\langle\nabla f(x_{k+1}), z-v_k\rangle +V_{v_k}(z)\] may be rewritten as \[z_{k+1}\gets v_k-\alpha_{k+1}\nabla f(x_{k+1}).\] Moreover, in the case of the Euclidean norm the sequence $\{v_k\}$ turns out to be identical to the sequence $\{z_k\}$. Now by direct substitution of $z_{k+1}$ and by using $(1-\tau_k)y_k=x_{k+1}-\tau_kv_k$ we get that \[y_{k+1}=\tau_k(z_k-\alpha_{k+1}\nabla f(x_{k+1}))+(1-\tau_k)y_k=x_{k+1}-\frac{1}{L_{k+1}}\nabla f(x_{k+1}).\] This means that the two methods are not just very similar, but are practically identical. The only difference between them is the usage of exact line search instead of a fixed-length gradient descent step.

\subsection{Convergence Analysis}
To ascertain the convergence of the above algorithm we will require the following lemmas:

\begin{lemma}
For any $u\in\mathbb{R}^n$

\[\alpha_{k+1}\langle\nabla f(x_{k+1}),z_k-u\rangle\leqslant\alpha_{k+1}^2L_{k+1}\left(f(x_{k+1})-f(y_{k+1})+\frac{\tau_k\eps}{2}\right)+\frac{1}{2}\|z_k-u\|^2-\frac{1}{2}\|z_{k+1}-u\|^2. \]
\end{lemma}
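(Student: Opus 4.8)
The plan is to reduce the asserted inequality to the \textbf{if}-condition checked inside the while loop of the ULCM algorithm. At the moment the loop breaks at step $k$, this condition reads
\[\alpha_{k+1}\langle\nabla f(x_{k+1}),z_k-z_{k+1}\rangle-\tfrac12\|z_k-z_{k+1}\|^2\leq \alpha^2_{k+1}L_{k+1}\left(f(x_{k+1})-f(y_{k+1})+\tfrac{\tau_k\eps}{2}\right),\]
and by Lemma 2.1 it is guaranteed to hold once $L_{k+1}\geq M(\tau_k\eps,\nu,M_\nu)$; since $L_{k+1}$ is initialised at $L_k/2$ and only ever doubled inside the loop, the loop terminates after finitely many doublings and the displayed inequality is valid for the final $L_{k+1}$ produced at step $k$.

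First I would use the explicit mirror step $z_{k+1}=z_k-\alpha_{k+1}\nabla f(x_{k+1})$, i.e. $\alpha_{k+1}\nabla f(x_{k+1})=z_k-z_{k+1}$, to rewrite the left-hand side of the claim as the purely Euclidean expression $\langle z_k-z_{k+1},z_k-u\rangle$. Next I would apply the elementary three-point identity obtained by expanding squared norms,
\[\langle z_k-z_{k+1},z_k-u\rangle=\tfrac12\|z_k-z_{k+1}\|^2+\tfrac12\|z_k-u\|^2-\tfrac12\|z_{k+1}-u\|^2.\]
Finally, using once more that $\|z_k-z_{k+1}\|^2=\alpha_{k+1}\langle\nabla f(x_{k+1}),z_k-z_{k+1}\rangle$, the first term becomes $\alpha_{k+1}\langle\nabla f(x_{k+1}),z_k-z_{k+1}\rangle-\tfrac12\|z_k-z_{k+1}\|^2$, which is exactly the left-hand side of the while-loop test; bounding it by $\alpha^2_{k+1}L_{k+1}\left(f(x_{k+1})-f(y_{k+1})+\tfrac{\tau_k\eps}{2}\right)$ via that test (equivalently, via Lemma 2.1) leaves precisely the residual $\tfrac12\|z_k-u\|^2-\tfrac12\|z_{k+1}-u\|^2$, which is the claim.

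I do not expect a genuine obstacle here: the whole argument is a single algebraic identity followed by an invocation of the preceding lemma. The only points requiring care are the bookkeeping of signs and coefficients when recombining the $-\tfrac12\|z_k-z_{k+1}\|^2$ term so that the inner product reappears with coefficient $\alpha_{k+1}$ rather than $1$, and the observation that the three-point relation is used as an exact equality, so that no slack is introduced before Lemma 2.1 is applied and the constant multiplying $f(x_{k+1})-f(y_{k+1})$ is exactly $\alpha^2_{k+1}L_{k+1}$.
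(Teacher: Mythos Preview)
Your proposal is correct and follows essentially the same approach as the paper: both arguments use the explicit update $\alpha_{k+1}\nabla f(x_{k+1})=z_k-z_{k+1}$, a Euclidean three-point identity, and then the loop-termination inequality (equivalently Lemma~2.1) to bound $\alpha_{k+1}\langle\nabla f(x_{k+1}),z_k-z_{k+1}\rangle-\tfrac12\|z_k-z_{k+1}\|^2$. The only cosmetic difference is that the paper first splits $z_k-u=(z_k-z_{k+1})+(z_{k+1}-u)$ and applies the identity to the second piece, whereas you substitute immediately and apply the identity to $\langle z_k-z_{k+1},z_k-u\rangle$; both routes produce the identical expression before the final bound.
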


\begin{proof}
\begin{align*}
\alpha_{k+1}&\langle\nabla f(x_{k+1}),z_k-u\rangle = \alpha_{k+1}\langle\nabla f(x_{k+1}),z_k-z_{k+1}\rangle+\alpha_{k+1}\langle\nabla f(x_{k+1}),z_{k+1}-u\rangle \\
&\stackrel{\scriptsize{\circled{1}}}{=} \alpha_{k+1}\langle\nabla f(x_{k+1}),z_k-z_{k+1}\rangle+\langle z_k-z_{k+1} ,z_{k+1}-u\rangle\\
&\stackrel{\scriptsize{\circled{2}}}{=}\alpha_{k+1}\langle\nabla f(x_{k+1}),z_k-z_{k+1}\rangle+\frac{1}{2}\|z_k-u\|^2-\frac{1}{2}\|z_{k+1}-u\|^2-\frac{1}{2}\|z_k-z_{k+1}\|^2\\
&\stackrel{\scriptsize{\circled{3}}}{\leqslant} \alpha_{k+1}^2L_{k+1}\left(f(x_{k+1})-f(y_{k+1})+\frac{\tau_k\eps}{2}\right)+\frac{1}{2}\|z_k-u\|^2-\frac{1}{2}\|z_{k+1}-u\|^2.
\end{align*}

Here, $\circled{1}$ is due to \[z_{k+1}=\argmin\limits_{z\in\mathbb{R}^n} \langle\alpha_{k+1}\nabla f(x_{k+1}),z\rangle+\frac{1}{2}\|z_k-z\|^2,\] which implies \[\nabla \left(\frac{1}{2}\|z_k-z\|^2+\langle\alpha_{k+1}\nabla f(x_{k+1}),z\rangle\right)\bigg\rvert_{z=z_{k+1}}=0.\] $\circled{2}$ follows from the triangle equality of Bregman divergence 
\[\langle -\nabla V_x(y),y-u\rangle = V_x(u)-V_y(u)-V_x(y),\] which takes the following form when $V_x(y)=\frac{1}{2}\|x-y\|^2$: \[\langle x-y,y-u\rangle=\frac{1}{2}\|x-u\|^2-\frac{1}{2}\|y-u\|^2-\frac{1}{2}\|x-y\|^2\]
Finally, $\circled{3}$ is due to our choice of $L_{k+1}.$

\end{proof}

\begin{lemma}
For any $u\in\mathbb{R}^n$

\begin{align*}
\alpha_{k+1}^2L_{k+1}f(y_{k+1})-&\left(\alpha^2_{k+1}L_{k+1}-\alpha_{k+1}\right)f(y_k)+\\&
\left(\frac{1}{2}\|z_{k+1}-u\|^2-\frac{1}{2}\|z_k-u\|^2\right)-\frac{\alpha_{k+1}\eps}{2}\leqslant\alpha_{k+1}f(u).
\end{align*}
\end{lemma}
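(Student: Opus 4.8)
The plan is to obtain the claim directly from \textsc{Lemma 2.2}, rewriting its right-hand side as the required combination of $f(y_k)$ and $f(u)$ via convexity of $f$ at the point $x_{k+1}$. First I would clean up \textsc{Lemma 2.2}: since $\tau_k=\tfrac{1}{\alpha_{k+1}L_{k+1}}$, we have $\alpha_{k+1}^2L_{k+1}\tau_k=\alpha_{k+1}$, so the error term $\alpha_{k+1}^2L_{k+1}\cdot\tfrac{\tau_k\eps}{2}$ collapses to $\tfrac{\alpha_{k+1}\eps}{2}$. Moving $\alpha_{k+1}^2L_{k+1}f(y_{k+1})$ and the two squared distances to the left, \textsc{Lemma 2.2} becomes
\[
\alpha_{k+1}^2L_{k+1}f(y_{k+1})+\tfrac12\|z_{k+1}-u\|^2-\tfrac12\|z_k-u\|^2-\tfrac{\alpha_{k+1}\eps}{2}\leqslant\alpha_{k+1}^2L_{k+1}f(x_{k+1})-\alpha_{k+1}\langle\nabla f(x_{k+1}),z_k-u\rangle .
\]
Subtracting $\bigl(\alpha_{k+1}^2L_{k+1}-\alpha_{k+1}\bigr)f(y_k)$ from both sides, the left-hand side becomes exactly the left-hand side of the statement, so it remains to prove
\[
\alpha_{k+1}^2L_{k+1}f(x_{k+1})-\alpha_{k+1}\langle\nabla f(x_{k+1}),z_k-u\rangle\leqslant\bigl(\alpha_{k+1}^2L_{k+1}-\alpha_{k+1}\bigr)f(y_k)+\alpha_{k+1}f(u).
\]

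For this, I would bound the two terms on the right from below by convexity: $f(y_k)\geqslant f(x_{k+1})+\langle\nabla f(x_{k+1}),y_k-x_{k+1}\rangle$ and $f(u)\geqslant f(x_{k+1})+\langle\nabla f(x_{k+1}),u-x_{k+1}\rangle$. Taking the combination with weights $\alpha_{k+1}^2L_{k+1}-\alpha_{k+1}$ and $\alpha_{k+1}$ — both nonnegative, because squaring the defining recursion for $\alpha_{k+1}$ gives $\alpha_{k+1}^2L_{k+1}-\alpha_{k+1}=\alpha_k^2L_k\geqslant0$ — the zeroth-order parts add up to $\alpha_{k+1}^2L_{k+1}f(x_{k+1})$, and the first-order parts, together with the $-\alpha_{k+1}\langle\nabla f(x_{k+1}),z_k-u\rangle$ term on the left, combine into $\langle\nabla f(x_{k+1}),(\alpha_{k+1}^2L_{k+1}-\alpha_{k+1})(y_k-x_{k+1})+\alpha_{k+1}(z_k-x_{k+1})\rangle$. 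The last observation is that this inner product vanishes: the bracketed vector equals $\alpha_{k+1}^2L_{k+1}\bigl((1-\tau_k)y_k+\tau_kz_k-x_{k+1}\bigr)$ after using $\tau_k=\tfrac{1}{\alpha_{k+1}L_{k+1}}$ and $1-\tau_k=\tfrac{\alpha_{k+1}^2L_{k+1}-\alpha_{k+1}}{\alpha_{k+1}^2L_{k+1}}$, and this is zero because the algorithm sets $x_{k+1}=\tau_kz_k+(1-\tau_k)y_k$. Combining this with the rewritten \textsc{Lemma 2.2} gives the claim.

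There is no real obstacle here beyond careful bookkeeping; the argument is the standard estimate-sequence / linear-coupling computation adapted to the inexact-oracle error $\tfrac{\tau_k\eps}{2}$. The one point that genuinely must be checked is the algebraic identity $\alpha_{k+1}^2L_{k+1}-\alpha_{k+1}=\alpha_k^2L_k$, which both supplies the nonnegativity of the weight on $f(y_k)$ (so convexity may be applied with that weight) and is exactly what makes the weights of the convexity combination match the coefficients appearing in the statement, so that the gradient term cancels against the choice of $x_{k+1}$.
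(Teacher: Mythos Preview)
Your proof is correct and uses the same ingredients as the paper's: convexity of $f$ at $x_{k+1}$ applied to both $u$ and $y_k$, \textsc{Lemma 2.2}, and the cancellation coming from the definition $x_{k+1}=\tau_k z_k+(1-\tau_k)y_k$ together with $\tau_k=\tfrac{1}{\alpha_{k+1}L_{k+1}}$. The only cosmetic difference is the order of presentation---the paper starts from the convexity bound $\alpha_{k+1}(f(x_{k+1})-f(u))\leqslant\alpha_{k+1}\langle\nabla f(x_{k+1}),x_{k+1}-u\rangle$, splits $x_{k+1}-u$ through $z_k$, and then invokes \textsc{Lemma 2.2}, whereas you start from \textsc{Lemma 2.2} and then apply convexity; the nonnegativity of the weight on $f(y_k)$ is obtained in the paper from $\tau_k\leqslant 1$ rather than from the identity $\alpha_{k+1}^2L_{k+1}-\alpha_{k+1}=\alpha_k^2L_k$, but these are equivalent.
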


\begin{proof}
We deduce the following sequence of relations:
\begin{align*}
&\alpha_{k+1}(f(x_{k+1})-f(u))\leqslant \alpha_{k+1}\langle \nabla f(x_{k+1}), x_{k+1}-u\rangle\\&=\alpha_{k+1}\langle \nabla f(x_{k+1}), x_{k+1}-z_k\rangle+\alpha_{k+1}\langle \nabla f(x_{k+1}), z_k-u\rangle\\
&\stackrel{\scriptsize{\circled{1}}}{=} \frac{(1-\tau_k)\alpha_{k+1}}{\tau_k}\langle \nabla f(x_{k+1}), y_k-x_{k+1}\rangle+\alpha_{k+1}\langle \nabla f(x_{k+1}), z_k-u\rangle\\
&\stackrel{\scriptsize{\circled{2}}}{\leqslant} \frac{(1-\tau_k)\alpha_{k+1}}{\tau_k}(f(y_k)-f(x_{k+1}))+\alpha^2_{k+1}L_{k+1}\left(f(x_{k+1})-f(y_{k+1})+\frac{\tau_k\eps}{2}\right)\\&+\frac{1}{2}\|z_{k}-u\|^2-\frac{1}{2}\|z_{k+1}-u\|^2
\stackrel{\scriptsize{\circled{3}}}{=} (\alpha^2_{k+1}L_{k+1}-\alpha_{k+1})f(y_k)-\alpha_{k+1}^2L_{k+1}f(y_{k+1})\\&+\alpha_{k+1}f(x_{k+1})+\left(\frac{1}{2}\|z_{k}-u\|^2-\frac{1}{2}\|z_{k+1}-u\|^2\right)+\frac{\alpha_{k+1}\eps}{2}.
\end{align*} Here, $\circled{1}$ uses the fact that our choice of $x_{k+1}$ satisfies $\tau_k(x_{k+1}-z_k)=(1-\tau_k)(y_k-x_{k+1})$. $\circled{2}$ is by convexity of $f(\cdot)$ and \textsc{Lemma 2.2}, while $\circled{3}$ uses the choice of $\tau_k=\frac{1}{\alpha_{k+1}L_{k+1}}$.
\end{proof}

We are now ready to begin our proof of the method's convergence.

\begin{theorem}
Let $f(x)$ be a convex, differentiable function such that its gradient satisfies the H$\ddot{\text{o}}$lder condition for some $\nu\in[0,1]$ with some finite $M_{\nu}$. Let $L_0$ also satisfy 
\[L_0\leqslant \inf_{\nu\in[0,1]}4\left[\frac{1-\nu}{1+\nu}\frac{M_\nu}{\eps}\right]^{\frac{1-\nu}{1+\nu}}M_\nu.\] Then ULCM($f$, $L_0$, $x_0$, $\eps$, $T$) outputs $y_T$ such that $f(y_T)-f(x^\ast)\leqslant\eps$ in the number of iterations

\[T\leqslant\inf_{\nu\in[0,1]}\ \left[\frac{1-\nu}{1+\nu}\right]^\frac{1-\nu}{1+3\nu}\left[\frac{2^\frac{3+5\nu}{2}M_\nu}{\eps}\right]^\frac{2}{1+3\nu}\Theta^\frac{1+\nu}{1+3\nu},\]where $\Theta$ is any upper bound on $\frac{1}{2}\|x_0-x^\ast\|^2$.
\end{theorem}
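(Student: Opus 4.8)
The plan is to telescope the inequality from \textsc{Lemma 2.3} over $k=0,1,\dots,T-1$ with the choice $u=x^\ast$. Summing the left-hand sides, the terms $\frac{1}{2}\|z_{k+1}-x^\ast\|^2-\frac{1}{2}\|z_k-x^\ast\|^2$ collapse to $\frac{1}{2}\|z_T-x^\ast\|^2-\frac{1}{2}\|z_0-x^\ast\|^2$, and the objective terms $\alpha_{k+1}^2L_{k+1}f(y_{k+1})-(\alpha_{k+1}^2L_{k+1}-\alpha_{k+1})f(y_k)$ telescope provided the weights match up, i.e. provided $\alpha_{k+1}^2L_{k+1}-\alpha_{k+1}=\alpha_k^2L_k$. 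This identity is exactly what the update $\alpha_{k+1}\gets\frac{1}{2L_{k+1}}+\sqrt{\frac{1}{4L_{k+1}^2}+\alpha_k^2\frac{L_k}{L_{k+1}}}$ is engineered to guarantee (it is the larger root of $L_{k+1}\alpha_{k+1}^2-\alpha_{k+1}-L_k\alpha_k^2=0$), so I would verify it first. With $\alpha_0=0$ the telescoped objective sum becomes $\alpha_T^2 L_T f(y_T)-\alpha_0^2L_0 f(y_0)=\alpha_T^2L_T f(y_T)$, and using $\alpha_T^2 L_T = \alpha_T + \alpha_{T-1}^2 L_{T-1} = \cdots = \sum_{k=0}^{T-1}\alpha_{k+1}$ on the right-hand side $\sum\alpha_{k+1}f(x^\ast)$, one arrives, after dropping the nonnegative $\frac{1}{2}\|z_T-x^\ast\|^2$ term, at
\[
\alpha_T^2 L_T\bigl(f(y_T)-f(x^\ast)\bigr)\leqslant \frac{1}{2}\|z_0-x^\ast\|^2+\frac{\eps}{2}\sum_{k=0}^{T-1}\alpha_{k+1}=\Theta+\frac{\eps}{2}\,\alpha_T^2 L_T,
\]
which rearranges to $f(y_T)-f(x^\ast)\leqslant \dfrac{\Theta}{\alpha_T^2 L_T}+\dfrac{\eps}{2}$.

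The next step is to lower-bound the growth rate of $\alpha_T^2 L_T$. From $\alpha_{k+1}L_{k+1}\geqslant 1$ (immediate from the quadratic, since $\alpha_{k+1}\geqslant 1/L_{k+1}$) and the recursion $\alpha_{k+1}=\frac{1}{2L_{k+1}}+\sqrt{\frac{1}{4L_{k+1}^2}+\alpha_k^2\frac{L_k}{L_{k+1}}}$, one gets $\sqrt{L_{k+1}}\,\alpha_{k+1}\geqslant \frac{1}{2\sqrt{L_{k+1}}}+\sqrt{L_k}\,\alpha_k\geqslant \frac{1}{2\sqrt{L_{k+1}}}+\sqrt{L_k}\,\alpha_k$. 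Since Lemma 2.1 guarantees the line-search terminates with $L_{k+1}\leqslant 2M(\tau_k\eps,\nu,M_\nu)$, and $\tau_k\eps$ can in turn be controlled, one obtains a uniform upper bound $L_{k+1}\leqslant \bar L$ for all $k$ (this is where the hypothesis $L_0\leqslant \inf_\nu 4[\tfrac{1-\nu}{1+\nu}\tfrac{M_\nu}{\eps}]^{\frac{1-\nu}{1+\nu}}M_\nu$ enters, preventing $L_k$ from ever doubling past $2M(\eps,\nu,M_\nu)$, together with $\tau_k\leqslant 1$). Then $\sqrt{L_{k+1}}\alpha_{k+1}\geqslant \sqrt{L_k}\alpha_k+\frac{1}{2\sqrt{\bar L}}$, so by induction $\sqrt{L_T}\,\alpha_T\geqslant \frac{T}{2\sqrt{\bar L}}$, hence $\alpha_T^2 L_T\geqslant \frac{T^2}{4\bar L}$.

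Combining, $f(y_T)-f(x^\ast)\leqslant \frac{4\bar L\,\Theta}{T^2}+\frac{\eps}{2}$. To finish, I would force the first term below $\eps/2$, i.e. require $T\geqslant \sqrt{8\bar L\Theta/\eps}$; substituting $\bar L = 2M(\eps,\nu,M_\nu)=2\bigl[\tfrac{1-\nu}{1+\nu}\tfrac{M_\nu}{\eps}\bigr]^{\frac{1-\nu}{1+\nu}}M_\nu$ and simplifying the exponents of $\eps$, $M_\nu$ and $\tfrac{1-\nu}{1+\nu}$ yields the bound $T\leqslant \bigl[\tfrac{1-\nu}{1+\nu}\bigr]^{\frac{1-\nu}{1+3\nu}}\bigl[\tfrac{2^{(3+5\nu)/2}M_\nu}{\eps}\bigr]^{\frac{2}{1+3\nu}}\Theta^{\frac{1+\nu}{1+3\nu}}$ after one takes the appropriate power to clear the implicit dependence of $\bar L$ on $\eps$ (the $\eps^{-(1-\nu)/(1+\nu)}$ inside $\bar L$ combines with the $\eps^{-1}$ outside, and raising $\sqrt{8\bar L\Theta/\eps}$ to the power $\tfrac{2(1+\nu)}{1+3\nu}$ is what produces the final exponents); since $\nu$ was arbitrary, take the infimum.

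The main obstacle is the uniform bound $L_k\leqslant \bar L$: one must check carefully that $\tau_k\leqslant 1$ always holds (which follows from $\tau_k=\frac{1}{\alpha_{k+1}L_{k+1}}$ and $\alpha_{k+1}L_{k+1}\geqslant 1$), so that $M(\tau_k\eps,\nu,M_\nu)\leqslant M(\eps,\nu,M_\nu)$ — here one uses that $M(\delta,\nu,M_\nu)$ is nonincreasing in $\delta$ when $\nu<1$ — and then that the doubling procedure inside the while-loop, starting from $L_{k+1}=L_k/2$, never overshoots $2M(\eps,\nu,M_\nu)$ given the initialization constraint on $L_0$. The second delicate point is bookkeeping the exponents so that the final $\eps$, $M_\nu$ powers come out exactly as stated; this is routine but error-prone, and I would organize it by writing $\bar L = C_\nu \eps^{-(1-\nu)/(1+\nu)}$ with $C_\nu = 2\bigl(\tfrac{1-\nu}{1+\nu}\bigr)^{(1-\nu)/(1+\nu)}M_\nu^{2/(1+\nu)}$ and tracking $C_\nu$ symbolically through the final substitution.
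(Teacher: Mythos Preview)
Your telescoping of \textsc{Lemma 2.3} to obtain
\[
f(y_T)-f(x^\ast)\leqslant \frac{\Theta}{\alpha_T^2 L_T}+\frac{\eps}{2}
\]
matches the paper exactly. The problem is in the second half, where you try to control the growth of $\alpha_T^2 L_T$ via a uniform upper bound $L_{k+1}\leqslant \bar L=2M(\eps,\nu,M_\nu)$.

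That bound is false, and the error is the direction of the monotonicity. You correctly note that $M(\delta,\nu,M_\nu)=\bigl[\tfrac{1-\nu}{1+\nu}\tfrac{M_\nu}{\delta}\bigr]^{\frac{1-\nu}{1+\nu}}M_\nu$ is nonincreasing in $\delta$; but $\tau_k\leqslant 1$ means $\tau_k\eps\leqslant\eps$, hence $M(\tau_k\eps,\nu,M_\nu)\geqslant M(\eps,\nu,M_\nu)$, not $\leqslant$. As the method progresses, $\tau_k=1/(\alpha_{k+1}L_{k+1})$ shrinks, so the admissible $L_{k+1}$ from \textsc{Lemma 2.1} actually \emph{grows}. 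There is no step-independent ceiling $\bar L$ for $\nu<1$. A quick sanity check confirms this: if your argument went through, it would yield $\alpha_T^2L_T\geqslant T^2/(4\bar L)$ and hence $T\sim \eps^{-1/(1+\nu)}\Theta^{1/2}$, which for $\nu<1$ beats the known lower bound $\eps^{-2/(1+3\nu)}\Theta^{(1+\nu)/(1+3\nu)}$. (This is also why your ``raise to the power $\tfrac{2(1+\nu)}{1+3\nu}$'' step at the end cannot be made rigorous --- the exponents genuinely do not match.)

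The paper does not attempt a uniform bound on $L_k$. Instead it keeps the $\tau_k$-dependent estimate
\[
L_{k+1}\leqslant 2\Bigl[\tfrac{1-\nu}{1+\nu}\tfrac{M_\nu}{\eps\tau_k}\Bigr]^{\frac{1-\nu}{1+\nu}}M_\nu,
\]
and since $\tau_k=\alpha_{k+1}/A_{k+1}$ with $A_k:=\alpha_k^2L_k$, this rewrites as a lower bound $\alpha_{k+1}\geqslant c_\nu\,A_{k+1}^{2\nu/(1+3\nu)}$. Combined with $A_{k+1}=A_k+\alpha_{k+1}$, one shows that $A_{k+1}^\gamma-A_k^\gamma$ is bounded below by a positive constant, where $\gamma=\tfrac{1+\nu}{1+3\nu}$; telescoping then gives $A_T\gtrsim T^{1/\gamma}=T^{(1+3\nu)/(1+\nu)}$, and \emph{this} is what produces the correct exponents. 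The role of the hypothesis on $L_0$ is only to anchor the induction showing the displayed bound on $L_{k+1}$ holds for every $k$, not to cap $L_k$ by a fixed number.
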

\begin{proof}
Note that our choice of $\alpha_{k+1}$ satisfies
\[\alpha^2_{k+1}L_{k+1}-\alpha_{k+1}=\alpha^2_{k}L_k,\addtag\] which allows us to telescope \textsc{Lemma 2.3}. Summing up \textsc{Lemma 2.3} for $k=0,1,\ldots, T-1$ and $u=x^\ast$, we obtain

\[\alpha_{T}^2L_{T}f(y_T)+\left(\frac{1}{2}\|z_T-x^\ast\|^2-\frac{1}{2}\|z_0-x^\ast\|^2\right)\leqslant\sum_{k=1}^T\alpha_kf(x^\*) +\sum_{k=1}^T\frac{\alpha_k\eps}{2}.\] By using $(1)$ we get that $\sum\limits_{k=1}^T\alpha_k=\alpha^2_TL_T$. We also notice that $\frac{1}{2}\|z_t-x^\ast\|^2\geqslant 0$ and $\frac{1}{2}\|z_0-x^\ast\|^2\leqslant \Theta$. Therefore, 

\[f(y_T)-f(x^\ast)\leqslant \frac{\Theta}{\alpha^2_TL_T}+\frac{\eps}{2}.\]

Note that our process of calculating $L_k$ guarantees that if the step $L_{k+1}\gets 2L_{k+1}$ of the algorithm was executed at least once for some $k$, then for that $k$

\[L_{k+1}\leqslant 2\left[\frac{1-\nu}{1+\nu}\frac{M_\nu}{\eps\tau_k}\right]^{\frac{1-\nu}{1+\nu}}M_\nu. \addtag\]

Assume that $L_n\leq 2\left[\frac{1-\nu}{1+\nu}\frac{M_\nu}{\eps\tau_{n-1}}\right]^{\frac{1-\nu}{1+\nu}}M_\nu$ and $L_{n+1}=\frac{L_n}{2}$ for some $n\geq 1$. Then \[\frac{1}{\tau_n}=\alpha_{n+1}L_{n+1}=\frac{1}{2}+\sqrt{\frac{1}{4}+\alpha_n^2L_nL_{n+1}}\geq\frac{1}{2}+\sqrt{\frac{1}{4}+\frac{\alpha_n^2L_n^2}{2}}\geq\frac{1}{\sqrt{2}\tau_{n-1}}.\]

\[L_{n+1}=\frac{L_n}{2}\leq \left[\frac{1-\nu}{1+\nu}\frac{\sqrt{2}M_\nu}{\eps\tau_{n}}\right]^{\frac{1-\nu}{1+\nu}}M_\nu\leq2\left[\frac{1-\nu}{1+\nu}\frac{M_\nu}{\eps\tau_{n}}\right]^{\frac{1-\nu}{1+\nu}}M_\nu.\] This shows that even if we don't execute the step $L_{k+1}\gets 2L_{k+1}$, (2) remains true as long as it held true on the previous iteration. All of the above proves that the assumption about $L_0$ in the statement of the theorem implies that (2) is true for all $k=0,\ldots T-1$. 

Denote $A_k=\alpha^2_kL_k$. We may now proceed to attain a lower bound on $A_T$.

\[\frac{\alpha^2_k}{A_k}=\frac{1}{L_k}\geqslant\frac{1}{2M_\nu}\left[\frac{1+\nu}{1-\nu}\frac{\eps}{M_\nu}\right]^{\frac{1-\nu}{1+\nu}}\left[\frac{\alpha_k}{A_k}\right]^{\frac{1-\nu}{1+\nu}}\]

\[\alpha_k\geqslant\frac{1}{2^\frac{1+\nu}{1+3\nu}M_\nu^\frac{2}{1+3\nu}}\left[\frac{1+\nu}{1-\nu}\eps\right]^\frac{1-\nu}{1+3\nu}A_k^\frac{2\nu}{1+3\nu}.\]

Denote $\gamma=\frac{1+\nu}{1+3\nu}\geqslant \frac{1}{2}.$ Since $A_{k+1}=A_k+\alpha_{k+1},$ 

\[A^\gamma_{k+1}-A^\gamma_{k+1}\geqslant \frac{A_{k+1}-A_{k}}{A_{k+1}^{1-\gamma}+A_k^{1-\gamma}}\geqslant\frac{\alpha_{k+1}}{2A_{k+1}^{1-\gamma}}\geqslant\frac{1}{2^\frac{2+4\nu}{1+3\nu}M_\nu^\frac{2}{1+3\nu}}\left[\frac{1+\nu}{1-\nu}\eps\right]^\frac{1-\nu}{1+3\nu}. \addtag\]Now we telescope (3) for $k=0,\ldots,T-1$ and  get

\[A_T\geqslant\left[\frac{1+\nu}{1-\nu}\right]^\frac{1-\nu}{1+\nu}\frac{T^\frac{1+3\nu}{1+\nu}\eps^\frac{1-\nu}{1+\nu}}{2^\frac{2+4\nu}{1+\nu}M_\nu^\frac{2}{1+\nu}}.\] This allows us to estimate the number of iterations necessary to achieve error no more than $\eps$. However, beforehand we shall note that this estimate heavily depends on $\nu$. By allowing $M_\nu$ to be infinite, we make the gradient of any differentiable function satisfy the H$\ddot{\text{o}}$lder condition for all $\nu\in[0,1]$. This in turn allows to easily select the most appropriate estimate:

\[T\leqslant\inf_{\nu\in[0,1]}\ \left[\frac{1-\nu}{1+\nu}\right]^\frac{1-\nu}{1+3\nu}\left[\frac{2^\frac{3+5\nu}{2}M_\nu}{\eps}\right]^\frac{2}{1+3\nu}\Theta^\frac{1+\nu}{1+3\nu}.\]

Note that since the solution $x^\ast$ was arbitrary, $x^\ast$ may now be considered to be the solution which minimizes $\frac{1}{2}\|x_0-x^\ast\|^2$.

\end{proof}
\subsection{Stopping criterion}

In \cite{anikin2017dual} it is shown that the original version of the Linear Coupling Method may be equipped with a stopping criterion. By using similar techniques, we are now going to show that our universal modification of said method may also be equipped with a calculable stopping criterion.

By ignoring the first inequality in the proof of \textsc{Lemma 2.3}, we get that for all $u\in\mathbb{R}^n$ (remember that $A_k=\alpha^2_k L_k$)
\begin{align*}
A_{k+1} f(y_{k+1})-A_k f(y_k)&+\frac{1}{2}\|z_{k+1}-u\|^2-\frac{1}{2}\|z_k-u\|^2-\frac{\alpha_{k+1}\eps}{2}\\&\leq \alpha_{k+1}\left(f(x_{k+1})+\langle\nabla f(x_{k+1}),u-x_{k+1}\rangle\right).
\end{align*}
 Summing up for $k=0,\ldots,m-1$, we obtain 

\[f(y_m)\leq \frac{\eps}{2}+\frac{1}{A_m}\min_{u\in\mathbb{R}^n}\left\lbrace\frac{1}{2}\|z_0-u\|^2+\sum_{i=1}^m\alpha_{i}\left(f(x_{i})+\langle\nabla f(x_{i}),u-x_{i}\rangle\right)\right\rbrace.\]

Denote \[l_m(u)=\sum_{i=1}^m\left[ \alpha_{i}\left(f(x_{i})+\langle\nabla f(x_{i}),u-x_{i}\rangle\right)\right]\] and

\[\hat{f}_m=\min_{u:\ \frac{1}{2}\|z_0-u\|^2\leq\Theta} \frac{1}{A_m} l_m(u).\]

Then by using strong duality one may see that

\begin{align*}
\hat{f}_m&=\min_{u\in \mathbb{R}^n}\max_{\lambda\geq 0}\left\lbrace\frac{1}{A_m} l_m(u)+\lambda\left(\frac{1}{2}\|z_0-u\|^2-\Theta\right)\right\rbrace\\&=\max_{\lambda\geq 0} \min_{u\in \mathbb{R}^n}\left\lbrace\frac{1}{A_m} l_m(u)+\lambda\left(\frac{1}{2}\|z_0-u\|^2-\Theta\right)\right\rbrace.
\end{align*}

By setting $\lambda=\frac{1}{A_m}$, we get that 

\[\hat{f}_m\geq\frac{1}{A_m}\min_{u\in\mathbb{R}^n}\left\lbrace\frac{1}{2}\|z_0-u\|^2+\sum_{i=1}^m\alpha_{i}\left(f(x_{i})+\langle\nabla f(x_{i}),u-x_{i}\rangle\right)\right\rbrace-\frac{\Theta}{A_m}.\]

Then $f(y_m)-\hat{f}_m\leq\frac{\eps}{2}+\frac{\Theta}{A_m}.$ This means that our method is primal-dual. By the convexity of $f$ we also get that $f(x^\ast)\geq\hat{f}_m$, so $f(y_m)-f(x^\ast)\le f(y_m)-\hat{f}_m\leq\eps$ may be used as an implementable stopping criterion. Of course, an estimate of $\Theta$ is required to compute $\hat{f}_m$. Overestimating $\Theta$ may lead to performing an excessive amount of iterations, while underestimating it invalidates the criterion completely. However, the stopping criterion requires an estimate of only one unknown parameter, which is also not used in the algorithm's definition. On the other hand, three unknown parameters ($\nu, M_\nu, \Theta$) need to be estimated to calculate the upper bound on the number of iterations required to get an $\eps$-accurate solution
\section{Line search}

During all of the previous analysis we assumed that $\forall x\in\mathbb{R}^n\ f(x),\ \nabla f(x)$, the steepest descent step, and the mirror descent step may be calculated exactly. However, in relation to the steepest descent step this assumption is not critical for the method's convergence.

For any convex function of one real argument defined on a segment of the form $[a,b]$ of length $l=b-a$ a point $y$ such that \[\|y-\argmin_{x\in[a,b]}f(x)\|\leqslant \eps \] may be found in $O(\log\frac{l}{\eps})$ function value calculations by using the bisection method. However, to perform an exact line search in our algorithm one needs to localize the solution first. To do that we propose the following simple procedure:


\begin{algorithm}
    \SetKwInOut{Input}{Input}
    \SetKwInOut{Output}{Output}
	
    \caption{Localize(f,$l_0$)}
    \Input{$f(x)$ -- convex function defined on $[0,+\infty)$; initial segment length $l_0$.}
    \Output{$l$ such that $\argmin\limits_{x\in[0,+\infty)} f(x) \in [0,l]$}
    $l\gets l_0$\\
    \While{$f(2l)\leqslant f(l)$}{
  		$l\gets2l$
    }
  \Return{$l$}
\end{algorithm}

Let us estimate the accuracy with which the steepest descent must be performed to guarantee our method's convergence. 
Let's say we want to get a solution with accuracy of $\eps+\delta$, where $\delta$ is the term resulting from the inaccuracy of the steepest descent step. To do that we need to slightly modify our algorithm: 
\newpage
\begin{algorithm}
    \SetKwInOut{Input}{Input}
    \SetKwInOut{Output}{Output}
	
    \caption{$\delta$-ULCM($f$, $L_0$, $x_0$, $\eps$, $\delta$, $T$)}
    \Input{$f$ a differentiable convex function with H$\ddot{\text{o}}$lder continuous gradient;
    initial value of the "inexact" Lipschitz continuity constant $L_0$;
    initial point $x_0$;
    accuracy $\eps$;
    line search accuracy $\delta$;
    number of iterations $T$.}
    $y_0 \gets x_0$, $z_0 \gets x_0$, $\alpha_0 \gets 0$\\
  \For{$k=0 \to T-1$}{
  	$L_{k+1}\gets\frac{L_{k}}{2}$\\
    \While{True}{
  	$\alpha_{k+1}\gets\frac{1}{2L_{k+1}}+\sqrt{\frac{1}{4L^2_{k+1}}+\alpha^2_k\frac{L_k}{L_{k+1}}}$\\
    $\tau_k\gets\frac{1}{\alpha_{k+1}L_{k+1}}$\\
    $x_{k+1}\gets\tau_kz_k+(1-\tau_k)y_k$\\
    Choose $y_{k+1}$ such that $f(y_{k+1})\leq \argmin\limits_{h\geq 0} f(x_{k+1}-h\nabla f(x_{k+1}))+\frac{\tau_k\delta}{2}$\\
  $z_{k+1}\gets\argmin\limits_{z\in\mathbb{R}^n}\ \langle\alpha_{k+1}\nabla f(x_{k+1}),z-z_k\rangle +\frac{1}{2}\|z_k-z\|^2$\\
    \If{$\langle \alpha_{k+1}\nabla f(x_{k+1}),z_k-z_{k+1}\rangle-\frac{1}{2}\|z_k-z_{k+1}\|^2\leq \alpha^2_{k+1}L_{k+1}(f(x_{k+1})-f(y_{k+1})+\frac{\tau_k\eps}{2})$}{\textbf{break}}
    \Else{$L_{k+1}\gets 2L_{k+1}$}
  	}
 
  }
  \Return{$y_T$}
\end{algorithm}

\begin{theorem}
Let $f(x)$ be a convex, differentiable function such that its gradient satisfies the H$\ddot{\text{o}}$lder condition for some $\nu\in[0,1]$ with some finite $M_{\nu}$. Let $L_0$ also satisfy 
\[L_0\leqslant \inf_{\nu\in[0,1]}4\left[\frac{1-\nu}{1+\nu}\frac{M_\nu}{\eps}\right]^{\frac{1-\nu}{1+\nu}}M_\nu.\] Then $\delta$-ULCM($f$, $L_0$, $x_0$, $\eps$, $\delta$, $T$) outputs $y_T$ such that $f(y_T)-f(x^\ast)\leqslant\eps+\delta$ in the number of iterations

\[T\leqslant\inf_{\nu\in[0,1]}\ \left[\frac{1-\nu}{1+\nu}\right]^\frac{1-\nu}{1+3\nu}\left[\frac{2^\frac{3+5\nu}{2}M_\nu}{\eps}\right]^\frac{2}{1+3\nu}\Theta^\frac{1+\nu}{1+3\nu},\]where $\Theta$ is any upper bound on $\frac{1}{2}\|x_0-x^\ast\|^2$.
\end{theorem}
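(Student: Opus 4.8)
The plan is to transcribe the proof of Theorem 2.4, isolating the only two places where exactness of the gradient step entered --- the well-definedness argument behind Lemma 2.1 and the per-step descent inequality of Lemmas 2.2--2.3 --- and to carry one additional $\frac{\alpha_{k+1}\delta}{2}$ slack through both. Write $\bar y_{k+1}=x_{k+1}-\bar h_{k+1}\nabla f(x_{k+1})$ with $\bar h_{k+1}=\argmin_{h\geq 0}f(x_{k+1}-h\nabla f(x_{k+1}))$ for the exact steepest-descent point; the rule defining $y_{k+1}$ in $\delta$-ULCM guarantees $f(y_{k+1})\leq f(\bar y_{k+1})+\frac{\tau_k\delta}{2}$, so every estimate of the exact case survives up to this single additive correction.

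First I would redo the well-definedness step. Applying Lemma 1.1 at $x=x_{k+1}$, $y=x_{k+1}-\tfrac{1}{L_{k+1}}\nabla f(x_{k+1})$ with $\delta$-parameter $\tau_k\eps$ and $M=L_{k+1}$ shows that, once $L_{k+1}\geq M(\tau_k\eps,\nu,M_\nu)$, one has $f(\bar y_{k+1})\leq f(x_{k+1})-\frac{\|\nabla f(x_{k+1})\|^2}{2L_{k+1}}+\frac{\tau_k\eps}{2}$, hence $f(y_{k+1})\leq f(x_{k+1})-\frac{\|\nabla f(x_{k+1})\|^2}{2L_{k+1}}+\frac{\tau_k(\eps+\delta)}{2}$. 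Combining this with the elementary inequality $\alpha_{k+1}\langle\nabla f(x_{k+1}),z_k-z_{k+1}\rangle-\frac12\|z_k-z_{k+1}\|^2\leq\frac{\alpha_{k+1}^2}{2}\|\nabla f(x_{k+1})\|^2$ used in Lemma 2.1 and the identity $\alpha_{k+1}^2L_{k+1}\tau_k=\alpha_{k+1}$, one obtains that the loop's exit test holds with its right-hand side enlarged by $\frac{\alpha_{k+1}\delta}{2}$ --- that is, the inexact line search changes the exit test only by absorbing the extra descent error, which for the purpose of termination amounts to reading the test with slack $\tau_k(\eps+\delta)/2$. Consequently the loop stops as soon as $L_{k+1}\geq M(\tau_k\eps,\nu,M_\nu)$, so the doubling estimate $L_{k+1}\leq 2M(\tau_k\eps,\nu,M_\nu)$ of the ULCM proof, its propagation under the hypothesis on $L_0$, and the resulting lower bound on $A_k=\alpha_k^2L_k$ are all inherited verbatim.

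Next I would re-run the convergence chain. The proofs of Lemmas 2.2 and 2.3 use only the loop's exit inequality, the optimality condition defining $z_{k+1}$, the Bregman triangle equality, convexity of $f$, and the relation $\tau_k(x_{k+1}-z_k)=(1-\tau_k)(y_k-x_{k+1})$; none of these refers to how $y_{k+1}$ is produced. So their $\delta$-ULCM analogues hold word for word except that the slack $\frac{\alpha_{k+1}\eps}{2}$ becomes $\frac{\alpha_{k+1}(\eps+\delta)}{2}$. Summing the analogue of Lemma 2.3 over $k=0,\dots,T-1$ with $u=x^\ast$, using $\sum_{k=1}^T\alpha_k=A_T$ (which follows from $\alpha_{k+1}^2L_{k+1}-\alpha_{k+1}=\alpha_k^2L_k$), $\frac12\|z_T-x^\ast\|^2\geq 0$ and $\frac12\|z_0-x^\ast\|^2\leq\Theta$ gives $f(y_T)-f(x^\ast)\leq \frac{\Theta}{A_T}+\frac{\eps+\delta}{2}$. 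Plugging in the unchanged lower bound on $A_T$ (obtained exactly as in Theorem 2.4 via the exponent $\gamma=\frac{1+\nu}{1+3\nu}$ and the telescoping of the discretized inequality), the value of $T$ that forces $\Theta/A_T\leq\frac{\eps}{2}$ is precisely the quantity in the statement, and for such $T$ we get $f(y_T)-f(x^\ast)\leq\frac{\eps}{2}+\frac{\eps+\delta}{2}\leq\eps+\delta$; as before, $x^\ast$ can then be taken to minimise $\frac12\|x_0-x^\ast\|^2$, justifying the use of any bound $\Theta$.

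The step I expect to be the main obstacle is precisely this well-definedness bookkeeping: one must ensure that replacing the exact steepest-descent point by a $\frac{\tau_k\delta}{2}$-suboptimal one does not push the accepted $L_{k+1}$ beyond $2M(\tau_k\eps,\nu,M_\nu)$, since the whole iteration-count estimate rests on that bound. This is what forces the extra $\frac{\tau_k\delta}{2}$ to be folded into the exit slack (so that the effective exit test reads $\tau_k(\eps+\delta)/2$) rather than being charged only to the final accuracy; once this is arranged, the rest of the argument is a faithful copy of the proof of Theorem 2.4.
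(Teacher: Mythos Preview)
Your proposal takes essentially the same approach as the paper: observe that whenever the exit inequality with the exact steepest-descent point $\hat y_{k+1}$ and slack $\tfrac{\tau_k\eps}{2}$ holds, the inequality with the inexact $y_{k+1}$ and slack $\tfrac{\tau_k(\eps+\delta)}{2}$ holds as well (since $f(y_{k+1})\le f(\hat y_{k+1})+\tfrac{\tau_k\delta}{2}$), then replay the Lemma~2.2--2.3 chain and the telescoping of Theorem~2.4 with $\eps$ replaced by $\eps+\delta$ in the slack term while keeping the $L_{k+1}\le 2M(\tau_k\eps,\nu,M_\nu)$ bound and hence the same $A_T$ estimate. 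The paper's proof is the one-paragraph version of exactly this argument.
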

 
This immediately follows from the proof of $\textsc{Theorem 2.4}$. To see that, note that if for some $L_{k+1}$ and the exact solution of the line search problem $\hat{y}_{k+1}$

\[\langle \alpha_{k+1}\nabla f(x_{k+1}),z_k-z_{k+1}\rangle-V_{z_k}(z_{k+1})\leq \alpha^2_{k+1}L_{k+1}\left(f(x_{k+1})-f(\hat{y}_{k+1})+\frac{\tau_k\eps}{2}\right)\] holds true, then by definition of $y_{k+1}$ we have
\[\langle \alpha_{k+1}\nabla f(x_{k+1}),z_k-z_{k+1}\rangle-V_{z_k}(z_{k+1})\leq \alpha^2_{k+1}L_{k+1}\left(f(x_{k+1})-f(y_{k+1})+\frac{\tau_k(\eps+\delta)}{2}\right).\] This leads to an analogue of \textsc{Lemma 2.1}. Then by proceeding with the proof the same way it was done in \textsc{Theorem 2.4}, one gets the desired result.

\subsection{Simplified function evaluation during line search}

As noted in \cite{SESOP}, for objectives of particular form the steepest descent step may be performed significantly faster.

Consider a function of the form 

\[f(x)=\phi(\bm{\rm{A}}x)+\psi(x),\] where $x\in \mathbb{R}^n$, $\bm{\rm{A}}\in \mathbb{R}^{n\times n}$.

If $n$ is sufficiently large, the computation of $\bm{\rm{A}}x$ may be the most time-consuming operation during computation of $f(x)$. However, if we are performing the steepest descent step, we can be sure that $x$ is of the form $x_k+\alpha\nabla f(x_k)$. Then

\[\bm{\rm{A}}x=\bm{\rm{A}}x_k+\alpha \bm{\rm{A}}\nabla f(x_k)=v_0+\alpha v_1.\] This shows that one may calculate the two points $v_0$ and $v_1$ just once at the beginning of a steepest descent step. 

If $\psi(y)$ and $\phi(y)$ with $y$ known may be calculated in $\mathcal{O}(n)$ arithmetic operations, then this representation of $\bm{\rm{A}}x$ allows us to evaluate $f(x)$ in $\mathcal{O}(n)$ arithmetic operations after performing matrix multiplication, which requires $\mathcal{O}(n^2)$ arithmetic operations, only twice. This may significantly decrease the cost of one steepest descent step.

\section{Numerical experiments}

The proposed methods were implemented in C$++$ and tested using the
modern versions of GCC, clang and ICC (Intel C Compiler) on both GNU/Linux,
Mac OS X and Microsoft Windows operating systems. The source code is available at \url{http://github.com/htower/ulcm}.

For the presented computational experiments we have also implemented a variant of the conjugate gradients method proposed by Y.~Nesterov in \cite{nesterov1989book}, which we denote as NCG. The method has high numerical stability and a number of interesting properties. In particular, it lacks a restart procedure. This results in an increased iteration complexity relatively to "classic" conjugate gradient methods, which may be attributed to the necessity of solving two line search problems at each iteration. Details are presented in Algorithm \ref{alg_ncg} and Figure \ref{fig_ncg}.

\begin{algorithm}
  \SetKwInOut{Input}{Input}
  \SetKwInOut{Output}{Output}

  \caption{NCG($f$, $x_0$, $\delta$, $T$)}
  \Input{$f$ a differentiable convex function with H$\ddot{\text{o}}$lder continuous gradient;
  initial point $x_0$;
  line search accuracy $\delta$;
  number of iterations $T$.}
  $y_{-2} \gets x_0$,
  $y_{-1} \gets x_0$,
  $y_{ 0} \gets x_0$ \\
  \For{$k = 0$ to $T-1$}
  {
    $\alpha_k \gets \argmin\limits_{\alpha \in \mathbb{R}} f(x_k + \alpha (y_{k-2} - x_k))$\\
    $y_k=x_k+\alpha_k(y_{k-2} - x_k)$\\
    $\beta_k \gets \argmin\limits_{\beta\ge 0} f(y_k - \beta \nabla f(y_k))$\\
    $x_{k+1}=y_k-\beta_k\nabla f(y_k)$
  }
  \Return{$x_T$}
  \label{alg_ncg}
\end{algorithm}

\begin{figure}
\includegraphics[]{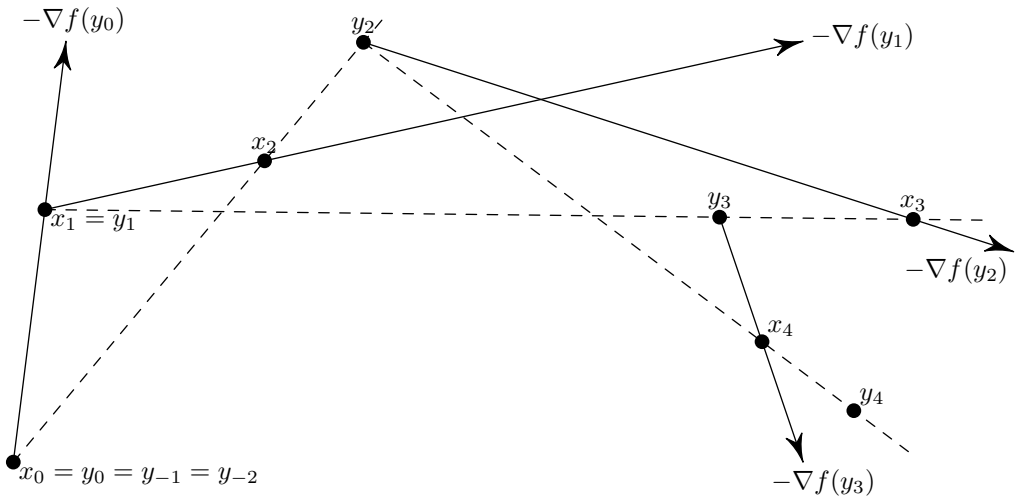}
\caption{Illustration of the NCG method.}
\label{fig_ncg}
\end{figure}

The behaviour of the proposed methods was investigated by a series of numerical experiments on different smooth and non-smooth optimization problems. For all experiments we set the starting point $x_0$ to $10 \cdot e$, where $e = (1,...,1)$, and the precision value $\varepsilon = 10^{-4}$. The methods were interrupted as soon as the objective function's value became lower than $f(x^*) + 5\varepsilon = f(x^*) + 5\times 10^{-4}$. The dimensionality of the problem was up to $10^6$ .

Firstly, we considered the following smooth (quadratic) problem:
\begin{equation}
  f(x) = \sum_{i=1}^{n} i x_i^2 .
\label{eq_problem_s}
\end{equation}
This function is $L$-smooth, but the parameter $L$ depends on the number of dimensions $n$ linearly. This minimization problem can be solved analytically, the optimal value $f(x^\ast)$ is equal to $0$. The results of our experiments are presented in Table~\ref{tbl_s} and Figure~\ref{fig_s36}.
\begin{table}
\begin{tabular}{r|l||r|r|r|r|r|r}
\multirow{2}{26mm}{$n$, problem size} & \multirow{2}{10mm}{$f(x_0)$} & \multicolumn{2}{c|}{UFGM} & \multicolumn{2}{c|}{ULCM} & \multicolumn{2}{c}{NCG} \\
 & & iterations & t, sec. & iterations & t, sec. & iterations & t, sec. \\ \hline
$10^3$ & $5 \cdot 10^7$    &   743 & 0.035 &   722 & 0.035 &  121 & 0.004 \\
$10^4$ & $5 \cdot 10^9$    &  3230 & 1.429 &  3459 & 3.233 &  385 & 0.079 \\
$10^5$ & $5 \cdot 10^{11}$ & 15231 & 141.2 & 18053 & 372.6 & 1217 & 2.796 \\
$10^6$ & $5 \cdot 10^{13}$ & 73185 & 6857 & 84117 & 22373 & 3850 & 98.40 \\
\end{tabular}
\caption{Method's complexity for the smooth problems.}
\label{tbl_s}
\end{table}

\begin{figure}
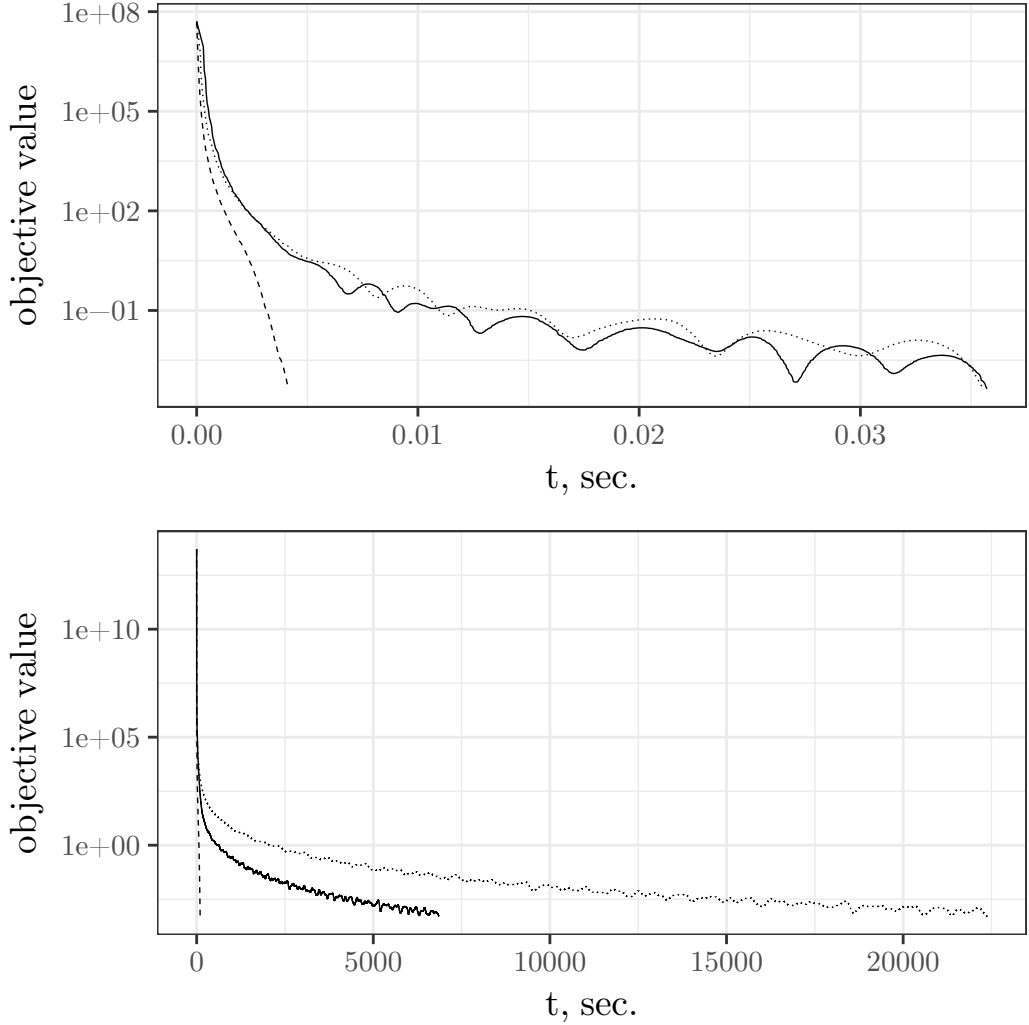

\includegraphics[]{pics/s3.pdf}
\includegraphics[]{pics/s6.pdf}
\caption{Methods convergence for the smooth problems with $n = 10^3$ (top) and $n = 10^6$ (bottom). The solid line stands for the UFGM method, the dotted line stands for the ULCM method, the dashed line stands for the NCG method.}
\label{fig_s36}
\end{figure}

Next, we consider the following non-smooth problem:
\begin{equation}
  f(x) = \max_{i=1,...,n} x_i + \frac{\mu}{2} \| x \|_2^2.
\label{eq_problem_ns}\end{equation}

In our experiments $\mu=0.1$.
Though this function is differentiable almost everywhere. Though it does not have globally H$\ddot{\text{o}}$lder continuous gradients, the gradient satisfies the H$\ddot{\text{o}}$lder continuity condition on any bounded set. 

This minimization problem can be solved analytically, the optimal value $f(x^\ast)$ is equal to $-\frac{1}{2\mu n}=-\frac{5}{n}$.

The gradient (subgradient, in case $f$  is not differentiable at $x$) can be evaluated as
\[
  \nabla f(x) = \mu x + z(x), \quad
  z(x) = (0,...0,1,0,...0) ,
\]
where $1$ is located at position $k = \argmin\limits_{i=1,...,n} x_i .$

The results are shown in Table~\ref{tbl_ns} and Figure~\ref{fig_ns3_ns6}.

\begin{table}
\begin{tabular}{r|l||r|r|r|r}
\multirow{2}{26mm}{$n$, problem size} & \multirow{2}{10mm}{$f(x_0)$} & \multicolumn{2}{c|}{UFGM} & \multicolumn{2}{c}{ULCM} \\
 & & iterations & t, sec. & iterations & t, sec. \\ \hline
$10^3$ & $1 \cdot 10^4$ &  535795 &  17.48 & 1376 & 0.175 \\
$10^4$ & $1 \cdot 10^5$ &  706870 &  233.8 & 6930 & 6.059 \\
$10^5$ & $1 \cdot 10^6$ & 1751285 &   4713 & 6950 & 34.18 \\
$10^6$ & $1 \cdot 10^7$ & 4341186 & 165435 & 6977 & 575.1 \\
\end{tabular}
\caption{Method's complexity for the non-smooth problems.}
\label{tbl_ns}
\end{table}

\begin{figure}
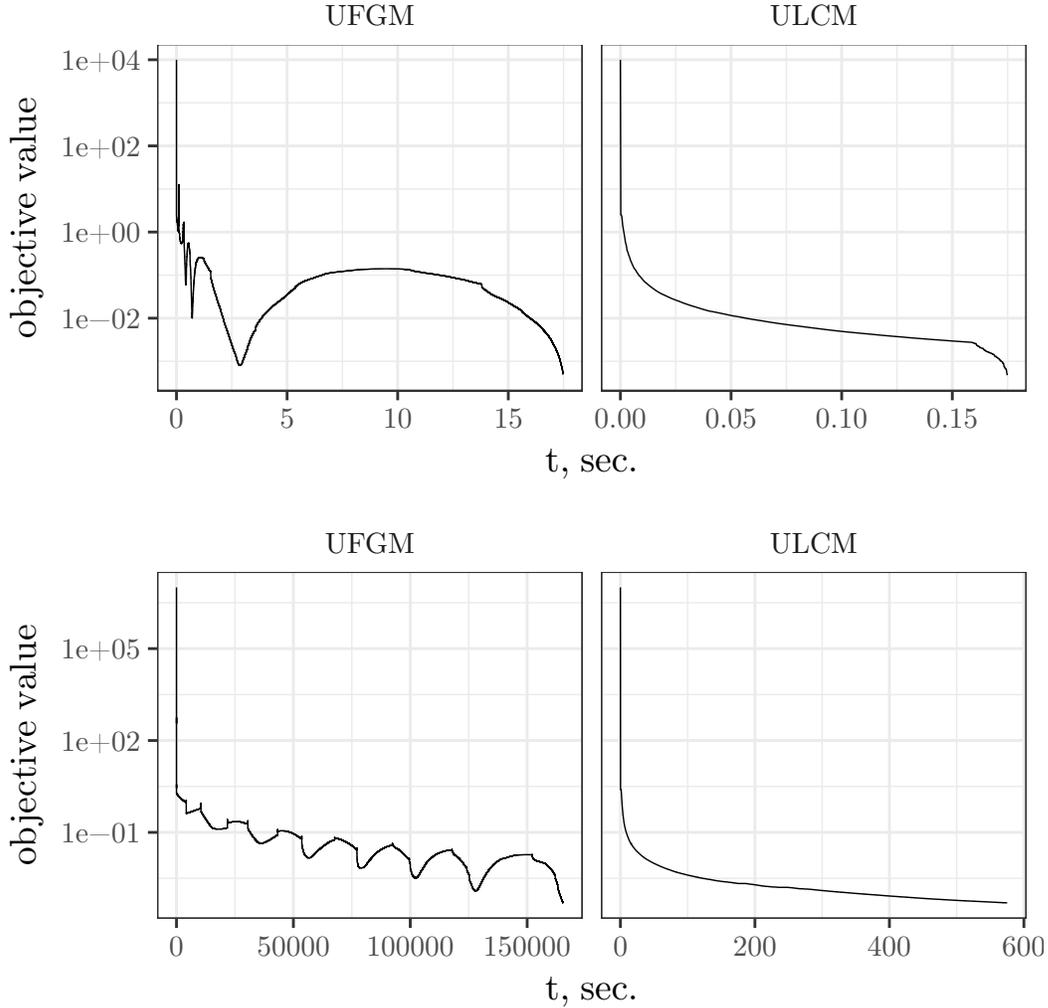

\includegraphics[]{pics/ns3.pdf}
\includegraphics[]{pics/ns6.pdf}
\caption{Methods convergence for the non-smooth problems with $n = 10^3$ (top) and $n = 10^6$ (bottom).}
\label{fig_ns3_ns6}
\end{figure}

Note that in our particular case, since the ULCM and UFGM methods become identical if the steepest descent of the ULCM methods is replaced with a gradient descent step with step length $\frac{1}{L_{k+1}}$, all the differences in actual performance may be attributed to the line search procedure.

The results of our experiments may be summarized as follows:
\begin{enumerate}
  \item For the smooth problems (\ref{eq_problem_s}) the NCG method showed best performance. Its convergence rate significantly exceeds the convergence rates of UFGM and ULCM methods by up to two orders of magnitude. Although the ULCM method took less iterations to converge, it was slower (about 3 times) in terms of running time. 
  \item For the non-smooth problems (\ref{eq_problem_ns}) the situation is opposite. In that case the ULCM method significantly outperformed UFGM, both in terms of required iterations and elapsed time. In the case of $10^6$ arguments our method converged about 300 times faster.
\end{enumerate}

\section*{Conclusions}
In this paper we propose the first primal-dual method of non-smooth convex optimization with auxiliary line search. Practical experiments show that this method significantly outperforms Nesterov's Universal Fast Gradient Method \cite{nesterov2015universal}. Moreover, we prove that the presented method is also optimal for all the problems with intermediate level of smoothness. The advantage of such an approach is that one can generalize it to stochastic programming using mini-batches \cite{gasnikov2016universal} and to gradient-free methods \cite{dvurechensky2017randomized}.  

\section*{Acknowledgements}

The authors would like to thank Boris Polyak and Yurii Nesterov for helpful comments.

\section*{Funding}
This work was partially supported by RNF 17-11-01027.

\bibliographystyle{plain}
\bibliography{bib.bib}
\end{document}